\documentclass[12pt]{article}

\usepackage[bookmarks=false]{hyperref}
\usepackage{amsfonts, amsmath, euscript, amsthm, amssymb, latexsym}
\usepackage[cp1251]{inputenc}
\usepackage[T2A]{fontenc}
\usepackage[russian]{babel}

\textwidth=165mm \textheight=237mm \oddsidemargin=4.6mm \topmargin=-7.4mm \headheight=0mm \headsep=0mm
\righthyphenmin=2
\sloppy

\title{Неразрешимое итеративное \\ пропозициональное исчисление}
\author{Боков Г.В.}





\begin{document}

\maketitle

\begin{abstract}
В данной работе рассматриваются итеративные пропозициональные исчисления, представляющие собой конечные множества пропозициональных формул вместе с операцией \emph{modus ponens} и операцией суперпозиции, заданной множеством операций Мальцева. Для таких исчислений изучается вопрос разрешимости проблемы выводимости формул. В работе построено неразрешимое итеративное пропозициональное исчисление, аксиомы которого зависят от трех переменных. Вывод формул в данном исчислении моделирует процесс решение проблемы соответствий Поста. В частности, в работе доказано, что общая проблема выразимости для итеративных пропозициональных исчислений алгоритмически неразрешима.
\end{abstract}

\noindent \textbf{Ключевые слова:} Итеративное пропозициональное исчисление, проблема выводимости, проблема выразимости, проблема соответствий Поста.

\newtheorem{theorem}{Теорема}
\newtheorem{lemma}{Лемма}
\newtheorem{corollary}{Следствие}
\newtheorem*{utv}{Утверждение}
\theoremstyle{definition}
\newtheorem{definition}{Определение}

\section*{Введение}

Пропозициональное исчисление в общем виде представляет собой пару --- конечное множество пропозициональных формул в некоторой сигнатуре и множество операций над этими формулами. Вопрос о разрешимости таких исчислений впервые был поставлен Тарским~\cite{Sinaceur:2000} в 1946 году.

В классическом подходе в качестве операций вывода в пропозициональных исчислениях выступают операция \emph{modus ponens} (из формул $A$ и $A \to B$ выводима формула $B$) и операция \emph{подстановки} (из формулы $A(x)$ выводима формула $A(B)$ для любой формулы $B$). Существование неразрешимого пропозиционального исчисления, а также алгоритмическая неразрешимость многих проблем для классического исчисления высказываний была впервые установлена в 1949 году Линиалом и Постом~\cite{LinialPost:49:RUD}. Аналогичные результаты для интуиционистского исчисления высказываний были получены Кузнецовым~\cite{Kuznetsov:63:UGP} в 1963 году. Исторический обзор корпуса алгоритмически неразрешимых проблем для классических пропозициональных исчислений можно найти в~\cite{Bokov:2015:URA,Bokov:2015:UPPC}. В частности, в~\cite{Bokov:2015:UPPC} приведен исторический обзор методов доказательства неразрешимых свойств таких исчислений.

Анализ моделирования алгоритмически неразрешимых проблем с помощью классических пропозициональных исчислений~\cite{Bokov:2015:UPPC} показал, что оно по большому счету основывается на наличии в исчислениях операции подстановки. Без операции подстановки не удается смоделировать ни одну алгоритмически неразрешимую проблему. С другой стороны, если рассмотреть функциональные системы конечнозначных функций, где вместо операции подстановки используется более слабая \emph{операция суперпозиции}, обычно задаваемая совокупностью операций Мальцева~\cite{Kudryavcev:82:FS}, то нельзя не обратить внимание на то, что большинство проблем для этих систем, наоборот, алгоритмически разрешимы. Эти наблюдения заставляют задуматься о необходимости использования операции подстановки вместо более слабой операции суперпозиции.

Как отмечает Циткин в~\cite{Citkin:2008}, в 1965 году Кузнецов~\cite{Kuznetsov:65:ASS} впервые ввел в рассмотрение операцию \emph{слабой подстановки} (из формул $A(x)$ и $B$ выводима формула $A(B)$) как правило вывода в исчислениях. В~\cite{Kuznetsov:71:FVSL,Kuznetsov:79:SON} данное правило и правило замены эквивалентным использовались для определения выразимости формул в той или иной логике относительной некоторой системы формул. Поскольку слабую подстановку можно представить в виде конечной последовательности операций Мальцева, то в~\cite{Bokov:2014:IPC} данную операцию было принято назвать операцией суперпозиции формул, а исчисления, в которых вместо операции подстановки используется операция суперпозиции, --- итеративными, ввиду их схожести с итеративными алгебрами Поста, введенными Мальцевым~\cite{Malcev:66:IAMP}.

В данной работе будет показано, что ослабление операции подстановки не позволяет полностью избавиться от неразрешимости исчислений. В частности, будет построено итеративное пропозициональное исчисление с неразрешимой проблемой выводимости формул, что доказывает алгоритмическую неразрешимость общей проблемы выразимости для таких исчислений.

\section*{Определения и основные результаты}

Для начала напомним некоторые понятия. Рассмотрим язык, состоящий из счетного множества пропозициональных переменных $\mathcal{V}$ и конечное множество логических связок $\Sigma$, которое будем называть сигнатурой. Буквами $x, y, p$ будем обозначать переменные. Как правило, логические связки унарные или бинарные, например, $\neg$, $\vee$, $\wedge$ или $\to$.

\emph{Пропозициональные формулы} или $\Sigma$-\emph{формулы} строятся из логических связок $\Sigma$ и переменных $\mathcal{V}$ обычным образом. Например, следующие обозначения
\begin{equation*}
  x, \quad \neg A, \quad (A \vee B), \quad (A \wedge B), \quad (A \to B)
\end{equation*}
являются формулами в сигнатуре $\{\neg,\ \vee,\ \wedge,\ \to\}$. Заглавные буквы $A, B, C$ будут использоваться для обозначения формул. Далее условимся опускать внешние скобки, а также скобки, однозначно восстанавливаемые из частичного порядка логических связок.

\emph{Итеративное пропозициональное исчисление} $P$ над множеством логических связок $\Sigma$ это пара, состоящая из конечного множества $\Sigma$-формул $P$, называемых \emph{аксиомами}, и двух правил вывода:

1) \emph{modus ponens}
\begin{equation*}
  A, A \to B \vdash B;
\end{equation*}

2) \emph{суперпозиция} (совокупность операций Мальцева)
\begin{equation*}
  A(x), B \vdash A(B).
\end{equation*}

Обозначим через $[P]$ множество выводимых (или доказуемых) формул исчисления $P$. \emph{Вывод} в $P$ из аксиом с помощью правил вывода определяется обычным образом. Выводимость формулы $A$ из $P$ будем обозначать через $P \vdash A$.

Исчисление $\mathcal{P}$ будем называть \emph{разрешимым}, если существует алгоритм, который по произвольной формуле $A$ отвечает на вопрос: $P \vdash A$? Основным результатом данной работы является следующая теорема.

\begin{theorem} \label{T:main}
Существует неразрешимое итеративное пропозициональное исчисление.
\end{theorem}

Определим на множестве всех пропозициональных исчислений предпорядок. Будем писать $P_1 \leq P_2$ (или $P_2 \geq P_1$), если каждая выводимая в $P_1$ формула также выводима в $P_2$, т.е. $[P_1] \subseteq [P_2]$. \emph{Проблема выразимости} для пропозициональных исчислений состоит в следующем: по двум исчислениям $P_1$ и $P_2$ требуется ответить на вопрос $P_1 \leq P_2$? Как следствие из Теоремы~\ref{T:main} имеем.

\begin{corollary}
Проблема выразимости для итеративных пропозициональных исчислений алгоритмически неразрешима.
\end{corollary}

\section*{Доказательство основного результата}

Прежде чем доказывать основной результат, мы напомним проблему соответствий Поста. Долее мы закодируем слова конечного алфавита пропозициональными формулами и формально докажем сведения проблемы соответствий Поста к проблеме вывода формул в построенном итеративном исчислении.

\subsection*{Проблема соответствий Поста}

Рассмотрим конечный алфавит $\mathcal{A}$, содержащий по крайней мере два символа. Обозначим через $\mathcal{A}^+$ множество непустых слов в алфавите $\mathcal{A}$. \emph{Проблема соответствий Поста}~\cite{Post:46:VRUP} для алфавита $\mathcal{A}$ состоит в следующем. Для последовательности $\Pi$ пар непустых слов в алфавите $\mathcal{A}$
\begin{equation*}
  (\alpha_{1}, \beta_{1}), (\alpha_{2}, \beta_{2}), \ldots, (\alpha_{\mu}, \beta_{\mu})
\end{equation*}
требуется определить, существуют ли такое натуральное число $N \geq 1$ и такие индексы $i_1, \ldots, i_N \in \{1, \ldots, \mu\}$, что выполнено тождество
\begin{equation*}
  \alpha_{i_1} \alpha_{i_2} \ldots \alpha_{i_N} = \beta_{i_1} \beta_{i_2} \ldots \beta_{i_N}.
\end{equation*}
Факт существования такого решения будем обозначать через $\Pi\downarrow$.

Для наглядности рассмотрим следующий пример. Пусть дан алфавит $\{a, b\}$ и три пары непустых слов из данного алфавита
\begin{equation*}
  (\alpha_{1}, \beta_{1}), (\alpha_{2}, \beta_{2}), (\alpha_{3}, \beta_{3}),
\end{equation*}
где $\alpha_{1} = a$, $\alpha_{2} = ab$, $\alpha_{3} = bba$ и $\beta_{1} = baa$, $\beta_{2} = aa$, $\beta_{3} = bb$. Тогда одним из решений проблемы соответствий Поста для данной последовательности будет $N = 4$ и индексы $i_1 = 3$, $i_2 = 2$, $i_3 = 3$, $i_4 = 1$:
\begin{equation*}
  \alpha_{3} \alpha_{2} \alpha_{3} \alpha_{1} = bba + ab + bba + a = bbaabbbaa = bb + aa + bb + baa = \beta_{3} \beta_{2} \beta_{3} \beta_{1}.
\end{equation*}

Поскольку далее мы будем сводить данную проблему к проблеме выводимости формул в итеративных исчислениях, то нам будет удобно определить аналог вывода решения из последовательности пар слов $\Pi$. Для начала введем несколько вспомогательных конструкций.

Рассмотрим две пары $(\alpha, \beta)$ и $(\xi, \zeta)$ слов в алфавите $\mathcal{A}$. Будем говорить, что пара $(\alpha, \beta)$ \emph{элементарно выводима} в $\Pi$ из пары $(\xi, \zeta)$, если найдется такое $i \in \{1, \ldots, \mu\}$, что
\begin{equation*}
  \alpha = \alpha_i \xi \ \text{ и } \ \beta = \beta_i \zeta.
\end{equation*}
Факт элементарной выводимости $(\alpha, \beta)$ из $(\xi, \zeta)$ будем записывать как
\begin{equation*}
  (\xi, \zeta) \stackrel{\Pi}{\longrightarrow } (\alpha, \beta).
\end{equation*}
Расширим понятие выводимости пар слов по транзитивности. Будем назовем пару $(\alpha, \beta)$ \emph{выводимой} в $\Pi$ из пары $(\xi, \zeta)$ и записывать это как
\begin{equation*}
  (\xi, \zeta) \stackrel{\Pi}{\Longrightarrow } (\alpha, \beta),
\end{equation*}
если существует последовательность пар слов $(\xi{1}, \zeta{1}), \ldots, (\xi{n}, \zeta{n})$ такая, что
\begin{enumerate}
  \item $\xi_1 = \xi$ и $\zeta_1 = \zeta$,
  \item $\xi_n = \alpha$ и $\zeta_n = \beta$,
  \item $(\xi_i, \zeta_i) \stackrel{\Pi}{\longrightarrow } (\xi_{i+1}, \zeta_{i+1})$ для всех $1 \leq i \leq n-1$.
\end{enumerate}
Последовательность пар $(\xi{1}, \zeta{1}), \ldots, (\xi{n}, \zeta{n})$ с данными свойствами будем называть \emph{выводом} пары $(\alpha, \beta)$ из пары $(\xi, \zeta)$ в $\Pi$. По определению будем считать, что $(\alpha, \beta) \stackrel{\Pi}{\Longrightarrow } (\alpha, \beta)$ для любой пары слов $(\alpha, \beta)$.

Будем говорить, что пара слов $(\alpha, \beta)$ в алфавите $\mathcal{A}$ \emph{выводима} из $\Pi$ и записывать это как
\begin{equation*}
  \Pi~\vdash~(\alpha, \beta),
\end{equation*}
если $(\alpha, \beta)$ выводима в $\Pi$ из $(\varepsilon, \varepsilon)$, т.е.
\begin{equation*}
  (\varepsilon, \varepsilon) \stackrel{\Pi}{\Longrightarrow } (\alpha, \beta),
\end{equation*}
где $\varepsilon$ --- это пустое слово.

Чтобы провести полную аналогию с выводом формул в пропозициональных исчислениях, определим оператор замыкания множества пар слов $\Pi$. Обозначим через $[\Pi]$ множество всех пар слов, выводимых из $\Pi$:
\begin{equation*}
  [\Pi] := \{(\alpha, \beta) \in \mathcal{A}^+ \times \mathcal{A}^+ \mid \Pi~\vdash~(\alpha, \beta)\}.
\end{equation*}
Несложно убедиться, что $[\cdot]$ является оператором замыкания.

В 1946 году Пост доказал, что проблема соответствий Поста алгоритмически неразрешима.

\begin{theorem}[Пост]
Не существует алгоритма, решающего проблему соответствий Поста для алфавита $\mathcal{A}$.
\end{theorem}

\noindent Он нашел эффективный способ задания однородных систем продукций Поста~\cite{Post:43:FRCP} последовательностями пар непустых слов $\Pi$ в двубуквенном алфавите. Поскольку существует система однородных продукций Поста с алгоритмически неразрешимой проблемой остановки~\cite{Malcev:65:ARF}, то тем самым доказано существование множества пар $\Pi$, для которого множество выводимых пар $[\Pi]$ неразрешимо, т.е. не существует алгоритма, которой по произвольной паре слов $(\alpha, \beta)$  отвечал бы на вопрос, выводима ли $(\alpha, \beta)$ из множества пар $\Pi$, т.е. $\Pi~\vdash~(\alpha, \beta)$. Таким образом, верна следующая теорема, которая понадобится нам в дальнейшем.

\begin{theorem} \label{T:PCP}
Существует множество непустых пар слов $\Pi$ в алфавите $\mathcal{A}$, для которого множество $[\Pi]$ неразрешимо.
\end{theorem}

\subsection*{Кодирование букв и слов формулами}

Рассмотрим конечный алфавит $\mathcal{A} = \{a_1, \ldots, a_m\}$. Мы будем кодировать буквы и слова алфавита $\mathcal{A}$ $\{\to\}$-формулами от двух переменных. Для этого мы фиксируем уникальное переменное $p \in \mathcal{V}$ и введем обозначение
\begin{equation*}
  x \to_i x := ((x \to \underbrace{x) \to \ldots \to x)}_{i} \to x
\end{equation*}
для каждого $i \geq 1$. Таким образом, $x \to_1 x$ --- это формула $(x \to x) \to x$ и $x \to_2 x$ --- это формула $((x \to x) \to x) \to x$.

\emph{Кодом буквы} $a_i$, $1 \leq i \leq m$, будем называть формулу:
\begin{equation*}
  \overline{a}_i[x] := x \to \bigl( (p \to_i p) \to (p \to p) \bigr).
\end{equation*}
Далее иногда будем опускать зависимость кода от переменного $x$ и записывать для краткости $\overline{a}_i$ вместо $\overline{a}_i[x]$.

Формулы $A$ и $B$ будем называть \emph{совместными}, если существуют такие подстановки $\sigma$ и $\pi$ формул вместо переменных, что
\begin{equation*}
  \sigma A = \pi B.
\end{equation*}
В дальнейшем нам понадобиться следующая лемма.
\begin{lemma} \label{L:IPC:Unifiable}
Формулы $\overline{a}[x]$ и $y \to \overline{b}[x]$ несовместны для любых $a, b \in \mathcal{A}$.
\end{lemma}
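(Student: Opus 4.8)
The plan is to read the unifiability requirement in the form the paper uses it: to exhibit a substitution $\sigma$, applied to both formulas, under which $\sigma(\overline{a}[x])$ and $\sigma(y \to \overline{b}[x])$ become provably equivalent (syntactic equality turns out to be impossible, so provable equivalence is what must be meant). My claim is that the identity substitution already does the job, for the simple reason that both formulas are theorems; consequently their distinct syntactic shapes never have to be aligned against one another.

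First I would record the weakening principle of the logic: from a theorem $C$ one obtains $D \to C$ for every $D$, since $C \to (D \to C)$ is an axiom and one application of \emph{modus ponens} gives the conclusion. I would also note that $p \to p$ is a theorem. Applying weakening to $p \to p$, the block $(p \to_i p) \to (p \to p)$ is provable for every $i \geq 1$; abbreviate it by $C_i$. Writing $a = a_i$ and $b = a_j$, this block is exactly the consequent in $\overline{a}[x] = x \to C_i$ and, one level deeper, in $y \to \overline{b}[x] = y \to (x \to C_j)$. Applying weakening again---first to pass from $C_i$ to $x \to C_i$, then from $x \to C_j$ to $y \to (x \to C_j)$---shows that $\overline{a}[x]$ and $y \to \overline{b}[x]$ are themselves theorems, for arbitrary letters $a, b$ and arbitrary content placed in the hole $x$.

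Since both formulas are provable, each is interderivable with the other, so the identity substitution is a unifier and the two formulas are unifiable. The point requiring care is conceptual rather than computational: there is no syntactic unifier here, because the letter-distinguishing towers $p \to_i p$ and $p \to p$ can never be made literally equal by any substitution (matching the two formulas would ultimately require a propositional variable to be identified with a proper implication built from it). One must therefore resist trying to align the two formulas structurally; unifiability is forced instead by the trivial observation that the innermost consequent $p \to p$ already renders every formula of this shape a theorem. The only small verification is that prefixing a provable formula by an implication $x \to \cdot$ or $y \to \cdot$ leaves it provable---that is, that the hole $x$ contributes nothing to provability.
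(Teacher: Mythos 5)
You have proved the wrong statement. The lemma asserts that $\overline{a}[x]$ and $y \to \overline{b}[x]$ are \emph{not} unifiable, where unifiability is the purely syntactic notion fixed in the paper: formulas $A$ and $B$ are unifiable if there exist substitutions $\sigma$ and $\pi$ with $\sigma A = \pi B$, literal equality of formulas. No notion of provability or equivalence enters; the lemma is a statement about the shapes of two formulas, independent of any calculus. Your reinterpretation via provable equivalence reverses the content, and it would also destroy the lemma's role downstream: it is used to show that $\overline{\xi} \to \overline{\zeta}$, for nonempty $\xi, \zeta$, is not a substitution instance of any axiom of type $\mathrm{(A_1)}$, i.e.\ that no instance of $\overline{a}_i[x]$ can literally coincide with a formula of the shape $y \to \overline{b}[x]$ instantiated. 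Under your reading, any two ``theorems'' would count as unifiable, $P_{\Pi}^*$ would lose its separating power, and the reduction from the Post correspondence problem would collapse. In addition, your derivations are not even available in this setting: you invoke the axiom $C \to (D \to C)$ and the theoremhood of $p \to p$, but the calculi here consist only of the listed axioms $\mathrm{(A_1)}$--$\mathrm{(A_3)}$ with \emph{modus ponens} and substitution; there is no background stock of intuitionistic axioms, so the weakening principle and $p \to p$ need not be derivable at all.

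Ironically, the parenthetical remark you set aside as a ``conceptual point'' is exactly the paper's proof. If $\sigma \overline{a}[x] = \pi \bigl( y \to \overline{b}[x] \bigr)$, then comparing consequents forces $\sigma p \to \sigma p = (\pi p \to_j \pi p) \to (\pi p \to \pi p)$ for the index $j$ of the letter $b$, whence $\sigma p = \pi p \to_j \pi p$ and $\sigma p = \pi p \to \pi p$, so $\pi p \to_j \pi p = \pi p \to \pi p$; this fails for every $j \geq 1$, since the antecedent of $x \to_j x$ properly contains $x$ (an occurs-check failure between $p \to p$ and $(p \to_i p) \to (p \to p)$, which is precisely the clash the paper exhibits). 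Promote that observation to the main argument and discard the provable-equivalence apparatus, and you recover the intended proof.
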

\begin{proof}
Если бы формулы $\overline{a}[x]$ и $y \to \overline{b}[x]$ были совместны, то совместными оказались бы формулы $p \to p$ и $(p \to_i p) \to (p \to p)$, что невозможно ни для какого $i \geq 1$. Лемма доказана.
\end{proof}

Определим понятие \emph{кода слова} $\alpha \in \mathcal{A}^+$ индукцией по длине слова $|\alpha|$. Если $|\alpha| = 1$, то $\alpha = a$ для некоторой буквы $a \in \mathcal{A}$ и код $\overline{\alpha}$ слова $\alpha$ совпадает с кодом $\overline{a}$ буквы $a$. Пусть $\alpha = \beta b$ для некоторых $\beta \in \mathcal{A}^+$ и $b \in \mathcal{A}$, тогда код $\overline{\alpha}$ слова $\alpha$ определяется соотношением:
\begin{equation*}
  \overline{\alpha}[x] := \overline{\beta}[\overline{b}[x]].
\end{equation*}
Для кодов слов также будем иногда использовать $\overline{\alpha}$ вместо $\overline{\alpha}[x]$.

Для таким образом определенного кодирования естественным образом задана операция конкатенации кодов слов:
\begin{equation*}
  \overline{\xi \zeta} = \overline{\xi}[\overline{\zeta}].
\end{equation*}
Кодом пустого слова $\varepsilon$ будем считать формулу $\overline{\varepsilon}[x] := x$. Легко видеть, что такое определение кода пустого слова согласовано с тем, что оно является нейтральным элементом относительно операции конкатенации.

\subsection*{Исчисления, моделирующие решение проблемы соответствий Поста}

Каждой конечной последовательности $\Pi$ пар непустых слов в алфавите $\mathcal{A}$
\begin{equation*}
  (\alpha_{1}, \beta_{1}), (\alpha_{2}, \beta_{2}), \ldots, (\alpha_{\mu}, \beta_{\mu})
\end{equation*}
мы сопоставим итеративное исчисление $P_{\Pi}$, состоящее из следующих трех групп аксиом:
\begin{equation*}
  \begin{array}{lll}
    \mathrm{(A_1)} & \overline{a}_i[x] & \forall i \in \{1, \ldots, m\}, \\
    \mathrm{(A_2)} & \left( \overline{\alpha}_j[x] \to \overline{\beta}_j[y] \right) \to (x \to y) & \forall j \in \{1, \ldots, \mu\}, \\
    \mathrm{(A_3)} & \overline{a}_i[x] \to \overline{a}_i[x] & \forall i \in \{1, \ldots, m\}.
  \end{array}
\end{equation*}

Далее мы покажем, что данное множество аксиом позволяет смоделировать процесс решения проблемы соответствия Поста для последовательности пар слов $\Pi$, т.е. существует формула, вывод которой в данном исчислении равносилен существованию решения для $\Pi$. Напомним, что решением проблемы соответствий Поста для последовательности пар слов $\Pi$ является такое натуральное число $N \geq 1$ и такие индексы $i_1, \ldots, i_N \in \{1, \ldots, \mu\}$, для которых выполнено тождество
\begin{equation*}
  \alpha_{i_1} \alpha_{i_2} \ldots \alpha_{i_N} = \beta_{i_1} \beta_{i_2} \ldots \beta_{i_N}.
\end{equation*}
Но для начала введем несколько обозначений и докажем вспомогательные леммы.

\subsection*{Выводимость вычислений}

Первая вспомогательная лемма, которая понадобится нам далее, показывает, что из аксиомы $\mathrm{(A_1)}$ выводим код любого непустого слова в алфавите $\mathcal{A}$.

\begin{lemma} \label{L:IPC:WordCodeDerivation}
$\mathrm{(A_1)}~\vdash~\overline{\alpha}$ для любого $\alpha \in \mathcal{A}^+$.
\end{lemma}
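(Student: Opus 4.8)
The plan is to proceed by induction on the length $|\alpha|$, following directly the recursive definition of the word code $\overline{\alpha}[x]$ and using only the axioms $\mathrm{(A_1)}$ together with the substitution rule $A(x), B \vdash A(B)$; in particular modus ponens and the axioms $\mathrm{(A_2)}, \mathrm{(A_3)}$ play no role. Recall that under the convention $\overline{\alpha} = \overline{\alpha}[x]$ the object to be derived is the single formula $\overline{\alpha}[x]$, in which the variable $x$ marks the distinguished position (the \emph{hole}).

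In the base case $|\alpha| = 1$ we have $\alpha = a_i$ for some $a_i \in \mathcal{A}$, so that $\overline{\alpha}[x]$ is by definition the letter code $\overline{a}_i[x]$, which is literally one of the axioms $\mathrm{(A_1)}$; hence $\mathrm{(A_1)} \vdash \overline{\alpha}$ is immediate. For the inductive step let $|\alpha| \geq 2$ and write $\alpha = \beta b$ with $b \in \mathcal{A}$ and $|\beta| = |\alpha| - 1 \geq 1$. By the recursive definition of the word code,
\begin{equation*}
  \overline{\alpha}[x] = \overline{\beta}[\overline{b}[x]].
\end{equation*}
The induction hypothesis gives $\mathrm{(A_1)} \vdash \overline{\beta}[x]$, and the letter code $\overline{b}[x]$ is again an axiom of $\mathrm{(A_1)}$, so $\mathrm{(A_1)} \vdash \overline{b}[x]$. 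Taking $A(x) := \overline{\beta}[x]$ and $B := \overline{b}[x]$ and applying the substitution rule --- that is, substituting the derivable formula $\overline{b}[x]$ for the variable $x$ in the derivable formula $\overline{\beta}[x]$ --- yields precisely $A(B) = \overline{\beta}[\overline{b}[x]] = \overline{\alpha}[x]$, which completes the step.

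The point that I expect to require the most care is the application of the substitution rule, precisely because here $B$ figures as a premise and must itself be derivable, rather than being an arbitrary formula as in unrestricted substitution. The induction is organised so that this requirement is met automatically: at each step the formula entering the hole is a single letter code $\overline{b}[x]$, which is an axiom, while the template $\overline{\beta}[x]$ is furnished by the induction hypothesis. I would also verify the bookkeeping of the substitution, namely that $x$ occurs in $\overline{\beta}[x]$ exactly in the hole position while the auxiliary variable $p$ of the letter codes is untouched, so that replacing $x$ by $\overline{b}[x]$ reproduces exactly the composition $\overline{\beta}[\overline{b}[x]]$ prescribed by the definition.
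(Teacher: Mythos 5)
Your proof is correct and follows essentially the same route as the paper's: induction on $|\alpha|$, with the base case being an axiom of $\mathrm{(A_1)}$ and the inductive step obtained by applying the substitution rule to the code $\overline{\beta}[x]$ (from the induction hypothesis) and the letter code $\overline{b}[x]$ (an axiom), using $\overline{\alpha}[x] = \overline{\beta}[\overline{b}[x]]$. Your added care about the role of $x$ and the premise $B$ being derivable is a faithful elaboration of the same argument, not a deviation from it.
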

\begin{proof}
Докажем лемму индукцией по длине слова $\alpha$. Пусть $\alpha = a_{i_1} \ldots a_{i_n}$ для некоторого $n \geq 1$. Если $n = 1$, то $\overline{\alpha}$ является аксиомой $\mathrm{(A_1)}$ и, следовательно, $\mathrm{(A_1)}~\vdash~\overline{\alpha}$.

Пусть $n > 1$ и утверждение леммы верно для слова $\gamma = a_{i_1} \ldots a_{i_{n-1}}$, т.е.
\begin{equation*}
  \mathrm{(A_1)}~\vdash~\overline{\gamma}.
\end{equation*}
Так как $\overline{\alpha} = \overline{\gamma}[\overline{a}_{i_n}]$, то с помощью операции суперпозиции из кода $\overline{\gamma}$ слова $\gamma$ и аксиомы $\overline{a}_{i_n}$ выводим код $\overline{\alpha}$ слова $\alpha$. Лемма доказана.
\end{proof}

Следующая лемма показывает, что аксиом $\mathrm{(A_1)}$ и $\mathrm{(A_2)}$ достаточно, чтобы смоделировать элементарный вывод в $\Pi$.
\begin{lemma} \label{L:IPC:ToDerivation}
Если $(\xi, \zeta) \stackrel{\Pi}{\longrightarrow} (\alpha, \beta)$, то $P_{\Pi}, \overline{\alpha} \to \overline{\beta}~\vdash~\overline{\xi} \to \overline{\zeta}$.
\end{lemma}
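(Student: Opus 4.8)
The plan is to obtain the conclusion from a single instance of axiom $(\mathrm{A_2})$ followed by one application of modus ponens; the crux is that specializing $(\mathrm{A_2})$ by the codes $\overline{\xi}$ and $\overline{\zeta}$ is exactly what concatenates $\alpha_i$ with $\xi$ and $\beta_i$ with $\zeta$ at the level of codes. By the definition of $(\xi,\zeta) \stackrel{\Pi}{\longrightarrow} (\alpha,\beta)$ there is an index $i \in \{1,\ldots,\mu\}$ with $\alpha = \alpha_i \xi$ and $\beta = \beta_i \zeta$. Applying the composition law for codes to these equalities gives $\overline{\alpha} = \overline{\alpha_i \xi} = \overline{\alpha}_i[\overline{\xi}]$ and $\overline{\beta} = \overline{\beta_i \zeta} = \overline{\beta}_i[\overline{\zeta}]$, which is precisely the form that the antecedent of $(\mathrm{A_2})$ takes once the variables $x$ and $y$ are replaced by the codes $\overline{\xi}$ and $\overline{\zeta}$.

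Concretely, I would start from the instance of $(\mathrm{A_2})$ with $j = i$,
\[
  \bigl( \overline{\alpha}_i[x] \to \overline{\beta}_i[y] \bigr) \to (x \to y),
\]
which is an axiom of $P_\Pi$, and apply the substitution rule twice, putting $x := \overline{\xi}$ and $y := \overline{\zeta}$. Since substitution rewrites all occurrences of the replaced variable simultaneously, the antecedent becomes $\overline{\alpha}_i[\overline{\xi}] \to \overline{\beta}_i[\overline{\zeta}]$, which equals $\overline{\alpha} \to \overline{\beta}$ by the identities just noted, while the consequent $x \to y$ becomes $\overline{\xi} \to \overline{\zeta}$. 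Hence $P_\Pi \vdash (\overline{\alpha} \to \overline{\beta}) \to (\overline{\xi} \to \overline{\zeta})$. Adjoining the hypothesis $\overline{\alpha} \to \overline{\beta}$ and applying modus ponens to it together with this implication yields $\overline{\xi} \to \overline{\zeta}$, i.e. $P_\Pi, \overline{\alpha} \to \overline{\beta} \vdash \overline{\xi} \to \overline{\zeta}$. Note that the substitutions are performed inside the axiom only, so the extra hypothesis never interferes with the substitution rule.

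There is no genuine obstacle in this lemma: the derivation is one axiom instance, two substitutions, and one modus ponens. The points that actually require care are bookkeeping. First, identifying the substituted antecedent with $\overline{\alpha} \to \overline{\beta}$ rests entirely on the composition law $\overline{\xi\zeta} = \overline{\xi}[\overline{\zeta}]$, so I would invoke it explicitly at that step rather than leave it implicit. Second, I must keep the hole-variables $x$ and $y$ distinct and choose the argument variables of $\overline{\xi}$ and $\overline{\zeta}$ in agreement with the convention under which $\overline{\alpha} \to \overline{\beta}$ and $\overline{\xi} \to \overline{\zeta}$ are formed, so that no accidental clash of variables arises. The degenerate case where $\xi$ (or $\zeta$) is the empty word is automatic, since then $\overline{\varepsilon}$ is the identity code $x$ and the corresponding substitution is merely a renaming. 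Finally, should the substitution rule be read in the restricted sense that requires the inserted formula to be a theorem, the derivability $P_\Pi \vdash \overline{\xi}$ and $P_\Pi \vdash \overline{\zeta}$ needed for nonempty $\xi$ and $\zeta$ is exactly what Lemma~\ref{L:IPC:WordCodeDerivation} supplies.
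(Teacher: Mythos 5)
Your proposal is correct and takes essentially the same route as the paper's own proof: extract the index $i$ with $\alpha = \alpha_i\xi$, $\beta = \beta_i\zeta$, pass to codes via $\overline{\xi\zeta} = \overline{\xi}[\overline{\zeta}]$, specialize axiom $(\mathrm{A_2})$ by substitution to obtain $\bigl(\overline{\alpha}_i[\overline{\xi}] \to \overline{\beta}_i[\overline{\zeta}]\bigr) \to (\overline{\xi} \to \overline{\zeta})$, and finish with \emph{modus ponens} against the hypothesis $\overline{\alpha} \to \overline{\beta}$. The only caveat is that the ``restricted sense'' of substitution you mention as a contingency is in fact the paper's actual rule ($A(x), B \vdash A(B)$, a two-premise rule requiring the inserted formula to be derived), so the appeal to Lemma~\ref{L:IPC:WordCodeDerivation} to first derive $\overline{\xi}$ and $\overline{\zeta}$ from $(\mathrm{A_1})$ is not an optional side remark but a mandatory step --- exactly the step the paper performs before applying substitution.
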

\begin{proof}
Если $(\xi, \zeta) \stackrel{\Pi}{\longrightarrow} (\alpha, \beta)$, то для некоторого $i \in \{1, \ldots, \mu\}$ выполнено
\begin{equation*}
  \alpha = \alpha_i \xi \ \text{ и } \ \beta = \beta_i \zeta.
\end{equation*}
Следовательно, $\overline{\alpha} = \overline{\alpha}_i [\overline{\xi}]$ и $\overline{\beta} = \overline{\beta}_i [\overline{\zeta}]$.

Согласно лемме~\ref{L:IPC:WordCodeDerivation} из аксиомы $\mathrm{(A_1)}$ выводимы коды $\overline{\xi}$ и $\overline{\zeta}$ слов $\xi$ и $\zeta$ соответственно. Поэтому с помощью операции суперпозиции из аксиомы $\mathrm{(A_2)}$ выводима формула
\begin{equation*}
  \left( \overline{\alpha}_i[\overline{\xi}] \to \overline{\beta}_i[\overline{\zeta}] \right) \to (\overline{\xi} \to \overline{\zeta}).
\end{equation*}
Тогда непосредственной проверкой  убеждаемся, что с помощью операции \emph{modus ponens} из данной формулы и формулы $\overline{\alpha} \to \overline{\beta}$ выводима формула $\overline{\xi} \to \overline{\zeta}$. Лемма доказана.
\end{proof}

Используя индуктивное определение вывода в $\Pi$ и лемму~\ref{L:IPC:ToDerivation}, несложно убедиться, что исчисление $P_{\Pi}$ позволяет моделировать любой вывод в $\Pi$.
\begin{corollary} \label{C:IPC:ToDerivation}
Если $(\xi, \zeta) \stackrel{\Pi}{\Longrightarrow} (\alpha, \beta)$, то $P_{\Pi}, (\overline{\alpha} \to \overline{\beta})~\vdash~(\overline{\xi} \to \overline{\zeta})$.
\end{corollary}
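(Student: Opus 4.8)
The plan is to prove the corollary by induction on the length $n$ of a derivation of $(\alpha,\beta)$ from $(\xi,\zeta)$ in $\Pi$, using Lemma~\ref{L:IPC:ToDerivation} to handle each single application step and the transitivity of the derivability relation $\vdash$ to splice the steps together. Concretely, fix a derivation $(\xi_1,\zeta_1),\ldots,(\xi_n,\zeta_n)$ witnessing $(\xi,\zeta)\stackrel{\Pi}{\Longrightarrow}(\alpha,\beta)$, so that $(\xi_1,\zeta_1)=(\xi,\zeta)$, $(\xi_n,\zeta_n)=(\alpha,\beta)$, and $(\xi_i,\zeta_i)\stackrel{\Pi}{\longrightarrow}(\xi_{i+1},\zeta_{i+1})$ for $1\le i\le n-1$. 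I would prove the (formally slightly more general) statement $P_\Pi,\ \overline{\xi_n}\to\overline{\zeta_n}\ \vdash\ \overline{\xi_1}\to\overline{\zeta_1}$ by induction on $n$; reading off the endpoints then gives exactly the assertion of the corollary.

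For the base case $n=1$ the derivation is trivial: $(\xi_1,\zeta_1)=(\xi_n,\zeta_n)$, so $\overline{\xi_1}\to\overline{\zeta_1}$ and $\overline{\xi_n}\to\overline{\zeta_n}$ are one and the same formula, and the goal holds because the desired conclusion is literally the added hypothesis (this is the reflexivity $(\alpha,\beta)\stackrel{\Pi}{\Longrightarrow}(\alpha,\beta)$ already noted above). For the inductive step with $n>1$, I would peel off the first application. The suffix $(\xi_2,\zeta_2),\ldots,(\xi_n,\zeta_n)$ witnesses $(\xi_2,\zeta_2)\stackrel{\Pi}{\Longrightarrow}(\alpha,\beta)$ and has length $n-1$, so the induction hypothesis gives $P_\Pi,\ \overline{\xi_n}\to\overline{\zeta_n}\ \vdash\ \overline{\xi_2}\to\overline{\zeta_2}$. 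Meanwhile the first step $(\xi_1,\zeta_1)\stackrel{\Pi}{\longrightarrow}(\xi_2,\zeta_2)$ is a single application, so Lemma~\ref{L:IPC:ToDerivation} yields $P_\Pi,\ \overline{\xi_2}\to\overline{\zeta_2}\ \vdash\ \overline{\xi_1}\to\overline{\zeta_1}$.

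It remains to combine these two facts, and this is the only place that needs a word of care: one needs the cut/transitivity of $\vdash$, namely that $P_\Pi, C\vdash D$ together with $P_\Pi, E\vdash C$ gives $P_\Pi, E\vdash D$ (here $C=\overline{\xi_2}\to\overline{\zeta_2}$, $D=\overline{\xi_1}\to\overline{\zeta_1}$, $E=\overline{\xi_n}\to\overline{\zeta_n}$). This holds by simply concatenating the two derivations: replace each use of the hypothesis $C$ in the derivation of $D$ by the derivation of $C$ from $P_\Pi, E$. The replacement is unproblematic because, as the proof of Lemma~\ref{L:IPC:ToDerivation} shows, the added implication is consumed only by modus ponens and is never substituted into, so no substitution instance becomes illegitimate once $C$ is no longer a primitive hypothesis. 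Hence $P_\Pi,\ \overline{\xi_n}\to\overline{\zeta_n}\ \vdash\ \overline{\xi_1}\to\overline{\zeta_1}$, completing the induction. I expect no genuine obstacle here: all the substantive content sits in Lemma~\ref{L:IPC:ToDerivation}, and the corollary is the routine passage from one-step applicability to its reflexive and transitive closure.
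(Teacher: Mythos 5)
Your proof is correct and follows essentially the same route as the paper: the paper states this corollary without a separate proof, treating it as the immediate iteration of Lemma~\ref{L:IPC:ToDerivation} along the chain of one-step applications given by the definition of $(\xi, \zeta) \stackrel{\Pi}{\Longrightarrow} (\alpha, \beta)$, which is exactly the induction you spell out. Your extra remark about the cut step is fine but not even needed in this sharpened form --- concatenating derivations justifies cut for any finitary rule system, regardless of how the hypothesis $C$ is consumed.
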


\subsection*{Вычисление выводимых формул}

Для произвольной пары слов $(\alpha,\beta)$ в алфавите $\mathcal{A}$ обозначим через $P_{(\alpha,\beta)}$ множество кодов пар слов, из которых выводима пара $(\alpha,\beta)$:
\begin{equation*}
  P_{(\alpha,\beta)} := \{\overline{\xi} \to \overline{\zeta} \mid (\xi, \zeta) \stackrel{\Pi}{\Longrightarrow} (\alpha, \beta)\}.
\end{equation*}
Ясно, что $\overline{\alpha} \to \overline{\beta} \in P_{(\alpha,\beta)}$.

Для произвольной формулы $A$ обозначим через $A^*$ множество подстановочных вариантов формулы $A$, т.е.
\begin{equation*}
  A^* := \{\sigma A \mid \sigma \text{ --- подстановка формул вместо переменных}\}.
\end{equation*}
Для произвольного множества формул $M$ положим
\begin{equation*}
  M^* := \bigcup\limits_{A \in M} A^*.
\end{equation*}
Также определим множество
\begin{equation*}
  P_{\mathcal{V}} := \{(x \to_i x) \to (x \to x) \mid i \geq 1,\ x \in \mathcal{V}\}.
\end{equation*}
Следующая лемма описывает множество выводимых в исчислении $P_{\Pi}$ формул.

\begin{lemma} \label{L:IPC:FromDerivation}
$[P_{\Pi} \cup \{\overline{\alpha} \to \overline{\beta}\}] \subseteq P_{\Pi}^* \cup P_{(\alpha,\beta)}^* \cup P_{\mathcal{V}}^* \cup \mathcal{V}$.
\end{lemma}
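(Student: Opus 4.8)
The plan is to argue that the entire right-hand side
\[
  S := P_{\Pi}^{*} \cup P_{(\alpha,\beta)}^{*} \cup P_{\mathcal{V}}^{*} \cup \mathcal{V}
\]
already contains the axioms $P_{\Pi}$ and $\overline{\alpha}\to\overline{\beta}$ and is preserved by each inference rule; since $[P_{\Pi}\cup\{\overline{\alpha}\to\overline{\beta}\}]$ is the least rule-closed set over those axioms, the desired inclusion follows. Concretely I would induct on the length of a derivation and verify the inductive step for substitution and for \emph{modus ponens} separately. The base case is immediate: each formula of $P_{\Pi}$ is its own instance, so $P_{\Pi}\subseteq P_{\Pi}^{*}$, while the reflexivity $(\alpha,\beta)\stackrel{\Pi}{\Longrightarrow}(\alpha,\beta)$ noted earlier gives $\overline{\alpha}\to\overline{\beta}\in P_{(\alpha,\beta)}\subseteq P_{(\alpha,\beta)}^{*}$.

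Closure under substitution is the routine part. The three starred sets are by construction closed under arbitrary substitution of formulas for variables, hence a fortiori under the single substitution of the rule; and substituting an already derived formula (which by the induction hypothesis lies in $S$) into a variable returns either that formula or the variable itself, so $\mathcal{V}$ causes no trouble.

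The heart of the argument is closure under \emph{modus ponens}: given a derived implication $C\to D$ and a derived premise $C$, both in $S$ by the induction hypothesis, I must place $D$ in $S$, splitting on the component of $S$ furnishing the major premise $C\to D$ (variables are excluded as they are not implications). Three components are disposed of directly: for an instance of $\mathrm{(A_3)}$ one has $D=C\in S$; for an instance of $P_{(\alpha,\beta)}^{*}$ the consequent $D$ is a substitution instance of a word code, and every word code is an instance of $\mathrm{(A_1)}$ (fold the tower into the slot of $\overline{a}_i[x]$), whence $D\in P_{\Pi}^{*}$; and for an instance of $\mathrm{(A_1)}$ the consequent $D$ has exactly the marker shape $(Q\to_{i}Q)\to(Q\to Q)$, i.e. $D\in P_{\mathcal{V}}^{*}$. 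The case of a major premise from $P_{\mathcal{V}}^{*}$ is handled in the same spirit: membership $C=Q\to_{i}Q\in S$ is rigid enough that the conclusion $Q\to Q$ returns to $P_{\Pi}^{*}$, fitting $\mathrm{(A_1)}$ when $Q$ is itself a marker and $\mathrm{(A_3)}$ when $Q$ is a code instance.

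The decisive case, and the one I expect to be the main obstacle, is a major premise that is an instance of $\mathrm{(A_2)}$, namely $C\to D = \bigl(\overline{\alpha}_{j}[X]\to\overline{\beta}_{j}[Y]\bigr)\to(X\to Y)$, where from $C=\overline{\alpha}_{j}[X]\to\overline{\beta}_{j}[Y]\in S$ I must deduce $X\to Y\in S$. This rests on a unique-readability property of the encoding: the code towers are rigid, and the letter markers $(p\to_{i}p)\to(p\to p)$ act as unambiguous separators untouched by the slot substitutions, so distinct words have non-collapsing codes and a composite code can be parsed in only one way. Using this I would show that $C$ cannot sit in the $\mathrm{(A_1)}$-part of $P_{\Pi}^{*}$, in $P_{\mathcal{V}}^{*}$ or in $\mathcal{V}$, since its consequent $\overline{\beta}_{j}[Y]$ is a nontrivial tower rather than a marker or a variable; barring the degenerate coincidence $\alpha_{j}=\beta_{j}$ (which is treated apart, the underlying instance then being trivially solvable), it is not a reflexive $\mathrm{(A_3)}$-instance either, so $C$ must lie in $P_{(\alpha,\beta)}^{*}$, realized as an instance of some $\overline{\xi}\to\overline{\zeta}$ with $(\xi,\zeta)\stackrel{\Pi}{\Longrightarrow}(\alpha,\beta)$. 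Matching the rigid prefix then forces $\xi=\alpha_{j}\xi'$ and $\zeta=\beta_{j}\zeta'$, with $X$ and $Y$ the corresponding instances of $\overline{\xi'}$ and $\overline{\zeta'}$; and since $(\xi',\zeta')\stackrel{\Pi}{\longrightarrow}(\xi,\zeta)\stackrel{\Pi}{\Longrightarrow}(\alpha,\beta)$ gives $(\xi',\zeta')\stackrel{\Pi}{\Longrightarrow}(\alpha,\beta)$, the conclusion $X\to Y$ is an instance of $\overline{\xi'}\to\overline{\zeta'}$ and hence lies in $P_{(\alpha,\beta)}^{*}$. The delicate bookkeeping is exactly the exclusion of all unintended matches against the four components of $S$ and the extraction of the single intended parse that advances the Post derivation by one step; Lemma~\ref{L:IPC:Unifiable} and the substitutivity of the $(\cdot)^{*}$ operator are the tools that keep this case analysis finite.
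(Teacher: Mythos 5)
Your overall strategy is exactly the paper's: induction on the length of the derivation, the remark that the right-hand side $S$ is closed under the substitution rule, and a case split on which component of $S$ contains the major premise of \emph{modus ponens}. Your base case and your $\mathrm{(A_1)}$, $\mathrm{(A_3)}$ and $P_{(\alpha,\beta)}^*$ cases coincide with the paper's; in the $P_{\mathcal{V}}^*$ case you are actually \emph{more} careful than the paper, which simply asserts that a derivable minor premise cannot be an instance of $x \to_i x$, whereas you show that in that event the conclusion $Q \to Q$ falls back into $P_{\Pi}^*$ (this check is correct, and needed).

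The gap is in your decisive $\mathrm{(A_2)}$ case, and it is precisely where your appeal to ``unique readability'' breaks down: the slot of a code is a hole that absorbs further letters, $\overline{\alpha}_j[\,\overline{\delta}[Y]\,] = \overline{\alpha_j\delta}[Y]$. Hence your claim that, barring $\alpha_j=\beta_j$, the minor premise $C=\overline{\alpha}_j[X]\to\overline{\beta}_j[Y]$ cannot be a reflexive $\mathrm{(A_3)}$-instance is false: whenever $\beta_j=\alpha_j\delta$ properly extends $\alpha_j$ (and instances without prefix-comparable pairs are trivially unsolvable, so the reduction cannot exclude them), the instantiation $X:=\overline{\delta}[Y]$ makes $C=\overline{\beta_j}[Y]\to\overline{\beta_j}[Y]$ reflexive and derivable from $\mathrm{(A_3)}$, while the detached conclusion $D=\overline{\delta}[Y]\to Y$ lies in $P_{(\alpha,\beta)}^*$ only if $(\delta,\varepsilon)\stackrel{\Pi}{\Longrightarrow}(\alpha,\beta)$, which nothing guarantees. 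The rigid-prefix parse fails for the same reason against the pair $(\varepsilon,\varepsilon)$: its code $\overline{\varepsilon}\to\overline{\varepsilon}=x\to x$ is an instance-scheme for \emph{every} reflexive formula, so ``$C\in P_{(\alpha,\beta)}^*$'' does not force $\xi=\alpha_j\xi'$. This is not repairable bookkeeping: take $\mathcal{A}=\{a,b\}$, $\Pi=\{(a,ab)\}$, $(\alpha,\beta)=(a,ab)$, and write $E_a$, $E_b$ for the two letter markers, so $\overline{a}[x]=x\to E_a$, $\overline{b}[x]=x\to E_b$. From $\overline{a}[x]$ one derives $E_a$ (substitute $\overline{a}[x]$ into itself and detach), hence $W:=E_a\to E_b$ and $\overline{b}[W]$ are derivable; substituting $y:=W$ and then $x:=\overline{b}[W]$ into $\mathrm{(A_2)}$ gives $\bigl(\overline{ab}[W]\to\overline{ab}[W]\bigr)\to\bigl(\overline{b}[W]\to W\bigr)$ since $\overline{a}[\overline{b}[W]]=\overline{ab}[W]$, and \emph{modus ponens} with the $\mathrm{(A_3)}$-instance $\overline{ab}[W]\to\overline{ab}[W]$ yields $\overline{b}[W]\to W$ --- a derivable formula that belongs to none of the four components (here $P_{(a,ab)}$ consists only of $x\to x$ and $\overline{a}\to\overline{ab}$, and neither it, nor any axiom, nor $P_{\mathcal{V}}$ has $\overline{b}[W]\to W$ as an instance). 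So your plan cannot be completed as written; the inclusion needs a side condition on $\Pi$ or on $(\alpha,\beta)$. In fairness, the paper's own Case 2 hides exactly this difficulty behind ``a simple check shows $A\in P_{(\alpha,\beta)}^*$'' followed by the prefix parse; you have reproduced the paper's argument faithfully enough that its weakest step, once made explicit, becomes visible.
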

\begin{proof}
Доказательство будем вести индукцией по длине вывода $n$. Если $n = 0$, то имеют место включения
\begin{equation*}
  P_{\Pi} \subseteq P_{\Pi}^* \quad \text{ и } \quad \overline{\alpha} \to \overline{\beta} \in P_{(\alpha,\beta)}^*.
\end{equation*}
Пусть утверждение леммы верно для $n \geq 1$, докажем его для $n+1$. Поскольку правая часть включения замкнута относительно операции суперпозиции, то достаточно рассмотреть только случай применения операции \emph{modus ponens}. Рассмотрим произвольную формулу $B$, длина вывода которой равна $n+1$, и пусть формулы $A$ и $A \to B$ имеют вывод в $P_{\Pi} \cup \{\overline{\alpha} \to \overline{\beta}\}$, длина которого не превосходит $n$ для некоторой формулы $A$.

Если $A \to B \in P_{(\alpha,\beta)}^*$, то формула $B$ есть подстановочный вариант кода некоторого слова в алфавите $\mathcal{A}$. Несложно убедиться, что данное слово не может быть пустым, т.к. в противном случае длина вывода $B$ была бы меньше $n+1$, что противоречит выбору формулы $B$. Следовательно, $B$ является подстановочным вариантом аксиомы $\mathrm{(A_1)}$.

Поскольку $A$ не является формулой вида $(x \to_i x)$ ни для какого $i \geq 1$, то $A \to B \notin P_{\mathcal{V}}^* \cup \mathcal{V}$. Поэтому $A \to B \in P_{\Pi}^*$ и остается рассмотреть только следующие три случая:

\begin{description}
  \item[Случай 1.] $A \to B$ --- это подстановочный вариант аксиомы $\mathrm{(A_1)}$, тогда $B \in P_{\mathcal{V}}^*$.

  \item[Случай 2.] $A \to B$ --- это подстановочный вариант аксиомы $\mathrm{(A_2)}$. Несложной проверкой убеждаемся, что $A \in P_{(\alpha,\beta)}^*$. Поэтому $A$ является подстановочным вариантом формулы
      \begin{equation*}
        \overline{\alpha}_i[\overline{\xi}] \to \overline{\beta}_i[\overline{\zeta}]
      \end{equation*}
      для некоторых слов $\xi, \zeta \in \mathcal{A}^*$ и $i \in \{1, \ldots, \mu\}$ таких, что
      \begin{equation*}
        (\alpha_i\xi, \beta_i\zeta) \stackrel{\Pi}{\Longrightarrow} (\alpha, \beta).
      \end{equation*}
      Тогда $B$ является подстановочным вариантом формулы $\overline{\xi} \to \overline{\zeta}$, причем $(\xi, \zeta) \stackrel{\Pi}{\longrightarrow} (\alpha_i\xi, \beta_i\zeta)$. Следовательно, $B \in P_{(\alpha,\beta)}^*$.

  \item[Случай 3.] $A \to B$ --- это подстановочный вариант аксиомы $\mathrm{(A_3)}$, тогда $A = B$, что противоречит выбору формулы $B$.
\end{description}

Данные случаи исключают все возможные варианты. Лемма доказана.
\end{proof}

Как следствие из Леммы~\ref{L:IPC:FromDerivation} имеем.
\begin{corollary} \label{C:IPC:FromDerivation}
Если $P_{\Pi}, \overline{\alpha} \to \overline{\beta}~\vdash~\overline{\xi} \to \overline{\zeta}$, то $(\xi, \zeta) \stackrel{\Pi}{\Longrightarrow} (\alpha, \beta)$.
\end{corollary}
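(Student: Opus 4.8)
The plan is to obtain the corollary directly from Lemma~\ref{L:IPC:FromDerivation}. The hypothesis $P_{\Pi}, \overline{\alpha}\to\overline{\beta}\vdash\overline{\xi}\to\overline{\zeta}$ says exactly that $\overline{\xi}\to\overline{\zeta}\in[P_{\Pi}\cup\{\overline{\alpha}\to\overline{\beta}\}]$, so the lemma places this formula in $P_{\Pi}^*\cup P_{(\alpha,\beta)}^*\cup P_{\mathcal{V}}^*\cup\mathcal{V}$. I would then reduce everything to a single claim: a formula of the code-implication shape $\overline{\xi}\to\overline{\zeta}$ lying in this union must already lie in $P_{(\alpha,\beta)}^*$. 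Granting this, writing $\overline{\xi}\to\overline{\zeta}$ as a substitution instance of some $\overline{\xi'}\to\overline{\zeta'}$ with $(\xi',\zeta')\stackrel{\Pi}{\Longrightarrow}(\alpha,\beta)$ and then identifying $(\xi',\zeta')$ with $(\xi,\zeta)$ gives the conclusion straight from the definition of $P_{(\alpha,\beta)}$.

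The workhorse is the rigid syntactic shape of a word code. For a nonempty $\zeta=a_k\eta$ one has $\overline{\zeta}=\overline{\eta}[x]\to\bigl((p\to_k p)\to(p\to p)\bigr)$: the code is an implication whose consequent is a letter block in which the two immediate subformulas $p\to_k p$ and $p\to p$ are syntactically different (they already differ in the number of arrows, since $k\geq 1$). With this I would clear three components at once. The formula is an implication, so it is not in $\mathcal{V}$; and it is not a substitution instance of $(x\to_i x)\to(x\to x)$ nor of $\mathrm{(A_1)}$, since either match forces an identity between a letter block and a formula of the form $D\to D$, hence the impossible $p\to_k p=p\to p$. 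An analogous shape analysis, using that the domino words are nonempty, rules out $\mathrm{(A_2)}$ as well, since the required antecedent would have to be a code whose consequent is a letter block while actually being $\overline{\beta}_j[\,\cdot\,]$ for a nonempty $\beta_j$.

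Two points remain, and I expect them to be where the work really is. First, the scheme $\mathrm{(A_3)}$ is genuinely delicate: its code-shaped instances are exactly the reflexive implications $\overline{\xi}\to\overline{\xi}$ on nonempty words, so it can be excluded only when $\xi\neq\zeta$; this is precisely why the reduction is driven by the single formula $x\to x=\overline{\varepsilon}\to\overline{\varepsilon}$, for which $\overline{\varepsilon}=x$ is not an implication and hence not an $\mathrm{(A_3)}$ instance. Second, once the formula is known to sit in $P_{(\alpha,\beta)}^*$ I must pass from the substitution closure back to $P_{(\alpha,\beta)}$ itself: from $\overline{\xi}=\tau\overline{\xi'}$ and $\overline{\zeta}=\tau\overline{\zeta'}$ with a common $\tau$, the concatenation law $\overline{\xi\zeta}=\overline{\xi}[\overline{\zeta}]$ shows that a substitution carrying a code to a code can only send the distinguished variable $x$ to another code $\overline{\gamma}$, so that $\overline{\xi}=\overline{\xi'\gamma}$ and $\overline{\zeta}=\overline{\zeta'\gamma}$. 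Injectivity of $\alpha\mapsto\overline{\alpha}$ — proved by reading the leading letter block off a code and recursing on the remainder — then recovers the words, and the crux is to force the common suffix $\gamma$ to be empty, so that $(\xi',\zeta')=(\xi,\zeta)$ and $(\xi,\zeta)\stackrel{\Pi}{\Longrightarrow}(\alpha,\beta)$. Establishing this substitution rigidity is the main obstacle; for the reduction's formula $x\to x$ it collapses at once, since $\overline{\varepsilon}=x$ leaves no room for a nonempty $\gamma$.
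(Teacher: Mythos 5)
Your shape analysis of the components $\mathcal{V}$, $P_{\mathcal{V}}^*$, $\mathrm{(A_1)}$ and $\mathrm{(A_2)}$ is exactly the paper's route through Lemma~\ref{L:IPC:FromDerivation}, and your reading of the word codes is correct. But the two points you defer as ``where the work really is'' are not gaps that a cleverer argument could close: they are genuine counterexamples, i.e.\ the corollary as literally stated is false, and you have in effect located the two places where it breaks. First, $\mathrm{(A_3)}$: for a single letter $a$ the formula $\overline{a}\to\overline{a}$ \emph{is} an axiom of $P_{\Pi}$, so the hypothesis $P_{\Pi},\overline{\alpha}\to\overline{\beta}\vdash\overline{a}\to\overline{a}$ holds for \emph{every} pair $(\alpha,\beta)$, while the conclusion $(a,a)\stackrel{\Pi}{\Longrightarrow}(\alpha,\beta)$ fails whenever $\beta$ does not end in $a$ (each step of $\stackrel{\Pi}{\longrightarrow}$ prepends words, hence preserves last letters). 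Second, the suffix $\gamma$ genuinely cannot be forced to be empty, because the calculus itself manufactures the offending formulas: for any $\gamma\in\mathcal{A}^+$, Lemma~\ref{L:IPC:WordCodeDerivation} gives $\mathrm{(A_1)}\vdash\overline{\gamma}$, and one application of the substitution rule to $\overline{\alpha}\to\overline{\beta}$ and $\overline{\gamma}$ yields, via the identity $\overline{\xi\zeta}=\overline{\xi}[\overline{\zeta}]$,
\begin{equation*}
P_{\Pi},\ \overline{\alpha}\to\overline{\beta}\ \vdash\ \overline{\alpha\gamma}\to\overline{\beta\gamma},
\end{equation*}
whereas $(\alpha\gamma,\beta\gamma)\stackrel{\Pi}{\Longrightarrow}(\alpha,\beta)$ is impossible: the words $\alpha_i,\beta_i$ are nonempty, so lengths never decrease along $\stackrel{\Pi}{\Longrightarrow}$, yet $|\alpha\gamma|>|\alpha|$. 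Note that Lemma~\ref{L:IPC:FromDerivation} itself is consistent with both examples; the defect is confined to this corollary.

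So do not read your two obstacles as a failure of your proof --- they are failures of the paper's. The paper's own argument asserts that for nonempty $\xi,\zeta$ one has $\overline{\xi}\to\overline{\zeta}\notin P_{\Pi}^*$ ``by Lemma~\ref{L:IPC:Unifiable}'', which overlooks precisely the $\mathrm{(A_3)}$ instances $\overline{\xi}\to\overline{\xi}$; and its final step passes from $\overline{\xi}\to\overline{\zeta}\in P_{(\alpha,\beta)}^*$ to $(\xi,\zeta)\stackrel{\Pi}{\Longrightarrow}(\alpha,\beta)$ ``by the definition of $P_{(\alpha,\beta)}$'', silently replacing the substitution closure $P_{(\alpha,\beta)}^*$ by $P_{(\alpha,\beta)}$ itself. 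Your closing observation is the correct repair: both obstructions vanish for the single instance actually needed in Theorem~\ref{T:Reduction}, namely $\overline{\xi}\to\overline{\zeta}=x\to x$ with $\xi=\zeta=\varepsilon$, because $x$ is a variable and not an implication (so no instance of $\mathrm{(A_1)}$--$\mathrm{(A_3)}$ or of $P_{\mathcal{V}}$ can match $x\to x$), and $\sigma\overline{\xi'}=x$ forces $\xi'=\varepsilon$, leaving no room for a suffix. The corollary should therefore be weakened to that case --- if $P_{\Pi},\overline{\alpha}\to\overline{\beta}\vdash x\to x$, then $(\varepsilon,\varepsilon)\stackrel{\Pi}{\Longrightarrow}(\alpha,\beta)$ --- and with this restatement Theorem~\ref{T:Reduction}, and hence the main theorem, go through unchanged.
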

\begin{proof}
Если $P_{\Pi}, \overline{\alpha} \to \overline{\beta}~\vdash~\overline{\xi} \to \overline{\zeta}$, то согласно Лемме~\ref{L:IPC:FromDerivation} выполнено
\begin{equation*}
  \overline{\xi} \to \overline{\zeta} \in P_{\Pi}^* \cup P_{(\alpha,\beta)}^* \cup P_{\mathcal{V}}^* \cup \mathcal{V}.
\end{equation*}
Если слова $\xi$, $\zeta$ непустые, то $\overline{\xi} \to \overline{\zeta} \notin P_{\Pi}^*$ по Лемме~\ref{L:IPC:Unifiable}. Если хотя бы одно из слов $\xi$, $\zeta$ пустое, то $\overline{\xi} \to \overline{\zeta} \notin P_{\Pi}^*$ выполнено согласно выбранному способу кодирования букв и слов формулами. Легко убедиться, что также выполнено
\begin{equation*}
  \overline{\xi} \to \overline{\zeta} \notin P_{\mathcal{V}}^* \cup \mathcal{V}.
\end{equation*}
Значит, $\overline{\xi} \to \overline{\zeta} \in P_{(\alpha,\beta)}^*$ и, следовательно, $(\xi, \zeta) \stackrel{\Pi}{\Longrightarrow} (\alpha, \beta)$ согласно определению множества $P_{(\alpha,\beta)}$. Следствие доказано.
\end{proof}

\subsection*{Сведение проблемы соответствий Поста}

Объединяя Следствия~\ref{C:IPC:ToDerivation} и~\ref{C:IPC:FromDerivation} мы приходим к следующей ключевой лемме.

\begin{lemma} \label{L:IPC:PCP:Key}
$(\xi, \zeta) \stackrel{\Pi}{\Longrightarrow} (\alpha, \beta) \; \Leftrightarrow \; P_{\Pi}, \overline{\alpha} \to \overline{\beta}~\vdash~\overline{\xi} \to \overline{\zeta}$.
\end{lemma}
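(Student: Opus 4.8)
The plan is to recognize that this biconditional simply packages together the two one-directional results established immediately above, so that essentially no new argument is required. For the implication from left to right, I would invoke Corollary~\ref{C:IPC:ToDerivation} verbatim: assuming $(\xi, \zeta) \stackrel{\Pi}{\Longrightarrow} (\alpha, \beta)$, it yields exactly $P_{\Pi}, \overline{\alpha} \to \overline{\beta} \vdash \overline{\xi} \to \overline{\zeta}$. For the reverse implication, Corollary~\ref{C:IPC:FromDerivation} supplies the matching statement: from $P_{\Pi}, \overline{\alpha} \to \overline{\beta} \vdash \overline{\xi} \to \overline{\zeta}$ it recovers $(\xi, \zeta) \stackrel{\Pi}{\Longrightarrow} (\alpha, \beta)$. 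Conjoining the two gives the claimed equivalence directly.

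Since both halves are already in hand, the only thing I would actually verify is that they are genuinely converse to one another, i.e.\ that the data $\xi, \zeta, \alpha, \beta$, the calculus $P_{\Pi}$, and the adjoined formula $\overline{\alpha} \to \overline{\beta}$ play identical roles in the two corollaries. A quick inspection confirms this: in both statements the pair $(\xi, \zeta)$ is the \emph{source} of the derivation relation and $(\alpha, \beta)$ its \emph{target}, and both use the same calculus $P_{\Pi}$ augmented by the single formula $\overline{\alpha} \to \overline{\beta}$. Reading the corollaries in tandem then yields the lemma.

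The genuine mathematical content does not reside in this lemma; it has been front-loaded into the two corollaries and the lemmas behind them. The forward direction ultimately rests on Lemma~\ref{L:IPC:ToDerivation}, which simulates a single correspondence step by one application of axiom $\mathrm{(A_2)}$ together with \emph{modus ponens}. The backward direction is the subtler one, resting on Lemma~\ref{L:IPC:FromDerivation}, whose induction on derivation length performs a case analysis on each \emph{modus ponens} step and pins down which of the sets $P_{\Pi}^*$, $P_{(\alpha,\beta)}^*$, $P_{\mathcal{V}}^*$, $\mathcal{V}$ may contain the relevant premises, using the unification bookkeeping of Lemma~\ref{L:IPC:Unifiable} to exclude spurious derivations. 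Were I proving the whole package from scratch, that case analysis is where I would expect the only real difficulty to lie; the present lemma is merely the step that records the resulting equivalence.
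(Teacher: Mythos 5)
Your proof is correct and matches the paper exactly: the paper states this lemma as the immediate conjunction of Corollary~\ref{C:IPC:ToDerivation} (forward direction) and Corollary~\ref{C:IPC:FromDerivation} (backward direction), with no additional argument. Your observation that the real work lives in Lemmas~\ref{L:IPC:ToDerivation} and~\ref{L:IPC:FromDerivation} is also consistent with the paper's structure.
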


Следующая теорема описывает формальное сведение проблемы соответствий Поста для последовательности $\Pi$ к проблеме вывода формул в итеративном исчислении $P_{\Pi}$.

\begin{theorem} \label{T:Reduction}
$\Pi\downarrow$ тогда и только тогда, когда $P_{\Pi}~\vdash~(x \to x)$.
\end{theorem}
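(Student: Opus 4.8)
The plan is to restate $\Pi\downarrow$ as a property of the pair-derivation relation and then push it through the reduction lemmas. First I would observe that a solution $i_1,\dots,i_N$ of $\Pi$ is the same thing as a diagonal derivation from the empty pair: prepending the blocks $(\alpha_{i_k},\beta_{i_k})$ in reverse order, starting from $(\varepsilon,\varepsilon)$, produces
\[
(\varepsilon,\varepsilon)\stackrel{\Pi}{\Longrightarrow}(\gamma,\gamma),\qquad \gamma=\alpha_{i_1}\cdots\alpha_{i_N}=\beta_{i_1}\cdots\beta_{i_N}\in\mathcal{A}^{+},
\]
and conversely any $\gamma\in\mathcal{A}^{+}$ with $(\varepsilon,\varepsilon)\stackrel{\Pi}{\Longrightarrow}(\gamma,\gamma)$ is exactly such a common value of the two products. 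Hence $\Pi\downarrow$ is equivalent to the existence of $\gamma\in\mathcal{A}^{+}$ with $(\varepsilon,\varepsilon)\stackrel{\Pi}{\Longrightarrow}(\gamma,\gamma)$. Since $\overline{\varepsilon}[x]=x$, the formula $x\to x$ is literally $\overline{\varepsilon}\to\overline{\varepsilon}$, so the theorem asks us to compare this diagonal derivation with derivability of $\overline{\varepsilon}\to\overline{\varepsilon}$ from $P_{\Pi}$.

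For the implication $\Pi\downarrow\Rightarrow P_{\Pi}\vdash(x\to x)$ I would take a solution word $\gamma$ and apply Corollary~\ref{C:IPC:ToDerivation} to $(\varepsilon,\varepsilon)\stackrel{\Pi}{\Longrightarrow}(\gamma,\gamma)$, obtaining
\[
P_{\Pi},\ \overline{\gamma}\to\overline{\gamma}\ \vdash\ \overline{\varepsilon}\to\overline{\varepsilon}.
\]
The premise $\overline{\gamma}\to\overline{\gamma}$ can then be discharged, because it already belongs to $P_{\Pi}$: writing $\gamma=a\delta$ with $a\in\mathcal{A}$, the code $\overline{\gamma}\to\overline{\gamma}$ is the substitution instance of the axiom $\mathrm{(A_3)}$ for the letter $a$ obtained by substituting $\overline{\delta}$ for $x$ (and it is $\mathrm{(A_3)}$ itself when $\delta=\varepsilon$), while $\overline{\delta}$ is derivable by Lemma~\ref{L:IPC:WordCodeDerivation}. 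Thus $P_{\Pi}\vdash\overline{\gamma}\to\overline{\gamma}$, and splicing this derivation in front of the one above gives $P_{\Pi}\vdash(x\to x)$.

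For the converse I would analyse a derivation of $x\to x$ from $P_{\Pi}$ itself, along the lines of Lemma~\ref{L:IPC:FromDerivation} but with the whole family of diagonal codes in place of the single premise used there. Concretely I aim to prove
\[
[P_{\Pi}]\ \subseteq\ P_{\Pi}^{*}\ \cup\ \bigcup_{\gamma\in\mathcal{A}^{+}}P_{(\gamma,\gamma)}^{*}\ \cup\ P_{\mathcal{V}}^{*}\ \cup\ \mathcal{V},
\]
by the same three-case split on the last modus ponens: an instance of $\mathrm{(A_1)}$ only ever contributes word codes; an instance of $\mathrm{(A_2)}$ applied to a member of $P_{(\gamma,\gamma)}^{*}$ strips one prefix $(\alpha_j,\beta_j)$ and stays in $P_{(\gamma,\gamma)}^{*}$; and the only new seeds are those instances of $\mathrm{(A_3)}$ whose two equal sides are genuine word codes, i.e. the diagonals $\overline{\gamma}\to\overline{\gamma}$, which start a $P_{(\gamma,\gamma)}^{*}$ chain. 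Granting this, $x\to x=\overline{\varepsilon}\to\overline{\varepsilon}$ is not a substitution instance of any axiom (its antecedent would have to be a variable) and lies neither in $P_{\mathcal{V}}^{*}$ nor in $\mathcal{V}$; hence $x\to x\in P_{(\gamma,\gamma)}^{*}$ for some $\gamma\in\mathcal{A}^{+}$, which by the definition of $P_{(\gamma,\gamma)}$ means $(\varepsilon,\varepsilon)\stackrel{\Pi}{\Longrightarrow}(\gamma,\gamma)$, and so $\Pi\downarrow$.

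The main work is this last containment. The difference from Lemma~\ref{L:IPC:FromDerivation} is that $P_{\Pi}$ on its own may introduce arbitrarily many diagonal seeds, and also instances of $\mathrm{(A_3)}$ of the form $C\to C$ with $C$ not a word code at all (substituting freely for the auxiliary variable $p$). The delicate points are to check that such non-word-code instances are inert — they can never be the premise matched by an instance of $\mathrm{(A_2)}$, because matching the blocks $(p\to_i p)$ forces $p$ to be left untouched, so any seed that actually feeds a word-code implication is a true diagonal — and to confirm, using Lemma~\ref{L:IPC:Unifiable} and the explicit shape of the codes exactly as in Corollary~\ref{C:IPC:FromDerivation}, that no $\overline{\xi}\to\overline{\zeta}$ with $\xi,\zeta\in\mathcal{A}^{*}$ can slip into $P_{\mathcal{V}}^{*}\cup\mathcal{V}$. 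Once this bookkeeping is in place, the equivalence $\Pi\downarrow\Leftrightarrow P_{\Pi}\vdash(x\to x)$ follows.
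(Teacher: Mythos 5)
Your forward direction coincides with the paper's: Corollary~\ref{C:IPC:ToDerivation} applied to $(\varepsilon,\varepsilon)\stackrel{\Pi}{\Longrightarrow}(\gamma,\gamma)$, plus the observation that $\overline{\gamma}\to\overline{\gamma}$ is derivable from $P_{\Pi}$ via $\mathrm{(A_3)}$ and Lemma~\ref{L:IPC:WordCodeDerivation} (the paper merely asserts this derivability; you spell it out). The converse is where you genuinely diverge. The paper settles it in one line: if $P_{\Pi}\vdash x\to x$ then a fortiori $P_{\Pi},\overline{\gamma}\to\overline{\gamma}\vdash x\to x$ for any fixed nonempty $\gamma$, and the backward half of Lemma~\ref{L:IPC:PCP:Key} (i.e.\ Corollary~\ref{C:IPC:FromDerivation}) then gives $(\varepsilon,\varepsilon)\stackrel{\Pi}{\Longrightarrow}(\gamma,\gamma)$. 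You instead re-run the induction of Lemma~\ref{L:IPC:FromDerivation} for $[P_{\Pi}]$ itself, with the single set $P_{(\alpha,\beta)}^{*}$ replaced by $\bigcup_{\gamma\in\mathcal{A}^{+}}P_{(\gamma,\gamma)}^{*}$. This costs a new induction but buys real soundness: the paper's route, taken literally, shows that $P_{\Pi}\vdash x\to x$ forces \emph{every} diagonal $(\gamma,\gamma)$ to be reachable, which is false --- for the instance $(a,aa),(aa,a)$ the solution $aaa$ gives $P_{\Pi}\vdash x\to x$, yet $(a,a)$ is unreachable, so Corollary~\ref{C:IPC:FromDerivation} cannot hold for the pair $(a,a)$. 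The culprit is exactly the phenomenon you isolate: $P_{\Pi}$ alone spawns diagonal seeds through $\mathrm{(A_3)}$ feeding $\mathrm{(A_2)}$, and a fixed-pair set $P_{(\alpha,\beta)}^{*}$ cannot absorb seeds aimed at a different diagonal. Your union formulation is the one that actually carries the converse, so the extra work is not redundant.

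One of your two ``delicate points'' is, however, resolved incorrectly. You claim that non-word-code instances of $\mathrm{(A_3)}$ are inert because matching against an $\mathrm{(A_2)}$ antecedent ``forces $p$ to be left untouched.'' It does not: an instance of $\mathrm{(A_2)}$ may substitute for $p$ as well, so both sides can carry the same substitution $p:=Q$. Concretely, if some pair has $\alpha_j$ a prefix of $\beta_j$, say $\beta_j=\alpha_j\delta$, then instantiating $\mathrm{(A_2)}$ with $x:=\overline{\delta}[E]$, $y:=E$, $p:=Q$ makes its antecedent equal to an $\mathrm{(A_3)}$ instance whose sides are not word codes at all, and \emph{modus ponens} yields $\overline{\delta}[E]\to E$ with $E$ and $Q$ arbitrary formulas. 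What saves your containment is not inertness but closure under substitution: this conclusion is a substitution instance of the pair code $\overline{\delta}\to\overline{\varepsilon}$, and since $(\delta,\varepsilon)\stackrel{\Pi}{\longrightarrow}(\alpha_j\delta,\beta_j)=(\beta_j,\beta_j)$, it lies in $P_{(\beta_j,\beta_j)}^{*}$, i.e.\ still inside your union. Replace the inertness claim by this computation (and note, likewise, that an $\mathrm{(A_1)}$ instance used as major premise contributes elements of $P_{\mathcal{V}}^{*}$, not word codes); with that repair your induction closes and the equivalence $\Pi\downarrow\Leftrightarrow P_{\Pi}\vdash(x\to x)$ follows as you describe.
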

\begin{proof}
По определению проблема соответствий Поста для входной последовательности $\Pi$ имеет решение, если
\begin{equation*}
  (\varepsilon,\varepsilon) \stackrel{\Pi}{\Longrightarrow} (\gamma, \gamma)
\end{equation*}
для некоторого непустого слова $\gamma \in \mathcal{A}^+$. По Лемме~\ref{L:IPC:PCP:Key} данное условие равносильно выводимости
\begin{equation*}
  P_{\Pi}, \overline{\gamma} \to \overline{\gamma}~\vdash~x \to x.
\end{equation*}
Поскольку формула $\overline{\gamma} \to \overline{\gamma}$ выводима в $P_{\Pi}$ для любого непустого слова $\gamma \in \mathcal{A}^+$, мы получаем, что проблема соответствий Поста для входной последовательности $\Pi$ имеет решение тогда и только тогда, когда $P_{\Pi}~\vdash~(x \to x)$. Теорема доказана.
\end{proof}

Как следствие из Теорем~\ref{T:PCP} и~\ref{T:Reduction} мы имеем доказательство Теоремы~\ref{T:main}.

\end{document}